\begin{document}

\makeatletter
\newtheorem*{rep@theorem}{\rep@title}
\newcommand{\newreptheorem}[2]{%
\newenvironment{rep#1}[1]{%
 \def\rep@title{#2 \ref{##1}}%
 \begin{rep@theorem}}%
 {\end{rep@theorem}}}
\makeatother

\newtheorem{thm}{Theorem}
\newreptheorem{thm}{Theorem}
\newtheorem{prop}{Proposition}
\newtheorem{cor}{Corollary}
\newreptheorem{cor}{Corollary}
\newtheorem{lem}{Lemma}
\newreptheorem{lem}{Lemma}
\newtheorem{quest}{Question}
\newtheorem*{conj*}{Conjecture}
\newtheorem*{thm*}{Theorem}

\theoremstyle{definition}
\newtheorem{defn}{Definition}

\theoremstyle{remark}
\newtheorem{rem}{Remark}

\newcommand{\Rho}{\mathrm{P}}
\newcommand{\EE}{\mathbb{E}}
\newcommand{\PP}{\mathbb{P}}

\title{General divisor function inequalities and the third cumulant}
\author{Zarathustra Brady}
\date{}
\maketitle

\begin{abstract} We extend a lower bound of Munshi on sums over divisors of a number $n$ which are less than a fixed power of $n$ from the squarefree case to the general case. In the process we prove a lower bound on the entropy of a geometric distribution with finite support, as well as a lower bound on the probability that a random variable is less than its mean given that it satisfies a natural condition related to its third cumulant.
\end{abstract}

\section{Introduction}

We consider the following problem: for which $\beta, \delta, s$ does the inequality
\begin{align}\label{main}
\tau(n)^s \ll_{\beta,\delta,s} \sum_{\substack{d\mid n\\ d\le n^{\delta}}} \tau(d)^{\beta}
\end{align}
hold for all positive integers $n$? Munshi \cite{munshi} solves this problem for $s=1$ and $n$ squarefree: if $0 < \delta \le \frac{1}{2}$ and
\[
\beta > \frac{1-H(\delta)}{\delta},
\]
where
\[
H(\delta) = \delta\log_2\left(\frac{1}{\delta}\right) + (1-\delta)\log_2\left(\frac{1}{1-\delta}\right),
\]
then inequality \eqref{main} holds for squarefree $n$. In fact, Munshi's argument easily generalizes to any $s$ if we require that
\[
\beta > \frac{s-H(\delta)}{\delta},
\]
and this is best possible by the same reasoning as in \cite{munshi}.

In this paper we generalize Munshi's argument to arbitrary natural numbers $n$. Our main result, proved in section 4, is the following.

\begin{repthm}{result} If $0 < \delta \le \frac{1}{2}$, $\beta,s \ge 0$ satisfy
\[
\beta > \frac{s - H(\delta)}{\delta},
\]
then
\[
\tau(n)^s \ll_{\beta,\delta,s} \sum_{\substack{d\mid n\\ d\le n^{\delta}}} \tau(d)^{\beta}.
\]
\end{repthm}

The main new idea in the proof is to sample divisors $d$ of $n$ from a probability distribution having high entropy, while keeping the average value of $\log(d)$ small. A crucial ingredient in the proof is the following entropy inequality, which is proved in section 3.

\begin{repcor}{geo} If $X$ is geometrically distributed on $\{0, ..., m\}$ with mean $\delta m$ then
\[
H(X) \ge \log_2(m+1)H(\delta),
\]
and the inequality is strict if $m > 1$ and $\delta \not\in \{0, \frac{1}{2}, 1\}$.
\end{repcor}

In the process of proving our main result, we also prove a variation on a related inequality due to Soundararajan. Suppose that $\delta$ is a real number between $0$ and $1$, and define $c(\delta)$ to be the largest real number such that, for any squarefree number $n$, we have the inequality
\begin{align}\label{secondary}
\sum_{\substack{d\mid n\\ d\le n^{\delta}}} \delta^{\omega(d)}(1-\delta)^{\omega(n/d)} \ge c(\delta).
\end{align}
Taking $n$ to have $k$ prime factors that are sufficiently close in size, we see that
\[
\delta < \frac{1}{k} \implies c(\delta) \le (1-\delta)^k.
\]
Soundararajan has shown in \cite{sound-mult} (with different notation - his $A(t)$ is our $c(1/(1+t))$, and his $B(t)$ is our $1-c(t/(1+t))$) the following recursive inequalities:
\begin{align*}
c\left(\frac{\delta}{1+\delta}\right) &\ge \frac{c(\delta)}{1+c(\delta)},\\
c\left(\frac{1}{2-\delta}\right) &\ge \frac{c(\delta)}{1+c(\delta)}.
\end{align*}
Using these together with the obvious bound $c(1) = 1$, he shows that $c(1-1/k) = 1/k$ for $k \in \mathbb{N}$, and that if $\delta$ is rational with continued fraction $[a_0,a_1,...,a_r]$ then
\[
c(\delta) \ge \frac{1}{a_0 + \cdots + a_r}.
\]

\begin{defn} For any integers $n \ge k$, define $g(n,k)$ by
\[
g(n,k) = \min_{a_1 + \cdots + a_n = 0} \big|\big\{S\subseteq \{1,...,n\}\big| |S| = k, \sum_{i\in S}a_i \ge 0\big\}\big|.
\]
\end{defn}

\begin{conj*}[Manickam-Mikl\'{o}s-Singhi] For $n \ge 4k$, $g(n,k) = \binom{n-1}{k-1}$.
\end{conj*}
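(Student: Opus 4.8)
The plan begins with the two easy halves. For the upper bound, take $a_1 = n-1$ and $a_2 = \cdots = a_n = -1$; a $k$-subset $S$ has $\sum_{i \in S} a_i = n - k \ge 0$ precisely when $1 \in S$, and $\sum_{i\in S}a_i = -k < 0$ otherwise, so exactly $\binom{n-1}{k-1}$ subsets are ``good'' (have nonnegative sum). Hence $g(n,k) \le \binom{n-1}{k-1}$ for all $n \ge k$, and the content of the conjecture is the matching lower bound $g(n,k) \ge \binom{n-1}{k-1}$ when $n \ge 4k$. For the lower bound, a few harmless normalizations: the count of good $k$-sets is invariant under permuting and positively scaling $(a_1,\dots,a_n)$, and since it takes only finitely many values, the minimum over the hyperplane $\sum_i a_i = 0$ is attained; if all $a_i$ share a sign they all vanish and every $k$-set is good, so we may assume $a_1 \ge \cdots \ge a_n$ with $a_1 > 0 > a_n$. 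A natural further reduction, which I would try to push through, is that an extremal sequence may be taken to have many vanishing subset sums (it sits on the boundary of a region where the count is locally constant), and in particular to take only a bounded number of distinct values.

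The engine of the argument, which already proves the conjecture when $k \mid n$, is a double count over equipartitions. Let $N = n/k$ and let $P$ be the number of partitions of $\{1,\dots,n\}$ into $N$ unordered blocks of size $k$. For each such partition the $N$ block sums add up to $\sum_i a_i = 0$, so at least one block is good; summing over partitions, the number of incident pairs (partition, good block it contains) is at least $P$. On the other hand, by symmetry every $k$-set occurs as a block in the same number $Q$ of partitions, and counting (partition, distinguished block) pairs gives $PN = Q \binom{n}{k}$. Writing $G$ for the number of good $k$-sets of our sequence and summing the incidence count the other way yields $GQ \ge P$, hence $G \ge P/Q = \binom{n}{k}/N = \binom{n-1}{k-1}$; as the sequence was arbitrary, $g(n,k) \ge \binom{n-1}{k-1}$.

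For general $n = qk + r$ with $0 < r < k$ the equipartition is unavailable, and this is where the real difficulty lies. The first thing to try is a fractional version of the same double count: replace equipartitions by near-perfect matchings of the complete $k$-uniform hypergraph on $\{1,\dots,n\}$ — families $\mathcal{M}$ of $q$ pairwise disjoint $k$-sets whose union omits an $r$-set $R_{\mathcal M}$ — arranged, via a Baranyai-type decomposition, so that each $k$-set is covered equally often. Now $\sum_{B \in \mathcal{M}} (\text{block sum}) = -\sum_{i \in R_{\mathcal M}} a_i$, so $\mathcal M$ contains a good block whenever $\sum_{i \in R_{\mathcal M}} a_i \le 0$; since the $r$-sets $R_{\mathcal M}$ are also covered uniformly and $\sum_{|R| = r}\sum_{i\in R}a_i = 0$, this holds for a positive fraction of the matchings, and one must squeeze the rest out by pairing a matching with positive leftover against a ``rotated'' matching and using that its block sums are correspondingly more negative. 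The hypothesis $n \ge 4k$ should enter exactly in guaranteeing $q = \lfloor n/k\rfloor \ge 4$, so that ``one nonnegative block out of $q$'' is not too wasteful and there is enough room to reshuffle the leftover. An alternative route for this regime, following Alon, Huang and Sudakov, is to pass to Fourier analysis on the Johnson slice $\binom{[n]}{k}$ and bound the degree-$1$ part of the indicator of good sets via hypercontractivity; this is robust but, like the fractional-partition idea, currently loses a factor and only delivers the conclusion for $n$ quadratic in $k$ (or linear with a large constant).

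The main obstacle is precisely promoting ``$n$ a large multiple of $k$'' to the sharp threshold $n \ge 4k$. When $q = \lfloor n/k\rfloor$ is as small as $4$ the bare pigeonhole step ``some block has nonnegative sum'' is too weak to beat $\binom{n-1}{k-1}$ after averaging, and the leftover $r$-set carries a sign we cannot control; extracting the sharp constant appears to require a stability statement — any sequence with close to $\binom{n-1}{k-1}$ good $k$-sets must be close to $(n-1,-1,\dots,-1)$ — together with an exact analysis of the second-best configurations. Turning that soft extremal picture into a quantitative bound valid all the way down to $n = 4k$ is, I expect, where a genuinely new idea is needed, which is why the statement is still only a conjecture.
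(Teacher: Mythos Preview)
The paper does not prove this statement: it is stated there as a \emph{conjecture} (the Manickam--Mikl\'os--Singhi conjecture), with a summary of known partial results ($k\le 7$; $n\ge 2k^3$; $n\ge 33k^2$; $n\ge 10^{46}k$), and is then used only to motivate the bound $c(\delta)\ge \delta$ via Proposition~1. There is therefore no proof in the paper to compare your attempt against.

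Your writeup is consistent with this: you give the standard upper-bound example $a_1=n-1$, $a_2=\cdots=a_n=-1$, the classical Baranyai/equipartition double count that settles the case $k\mid n$, and then candidly explain why the fractional-matching and Fourier/hypercontractivity approaches currently lose constants and do not reach the sharp threshold $n\ge 4k$. You end by saying ``which is why the statement is still only a conjecture,'' so you are not actually claiming a proof. That assessment is accurate; the pieces you present for $k\mid n$ and for the upper bound are correct and well known, but the general lower bound for all $n\ge 4k$ remains open, and nothing in your sketch closes that gap. In short: there is no discrepancy with the paper, because neither you nor the paper proves the conjecture.
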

The Manickam-Mikl\'{o}s-Singhi conjecture has been proved in the cases $k \le 7$ \cite{mms}, $n\ge 2k^3$, $n \ge 33k^2$ \cite{mms-quad}, and $n \ge 10^{46}k$ \cite{mms-linear}. In \cite{mms}, a slightly stronger conjecture is made based on numerical evidence: if $\binom{n-1}{k-1} \le \binom{n-3}{k}$, then $g(n,k) = \binom{n-1}{k-1}$. The MMS conjecture is related to $c(\delta)$ by the following easy proposition.

\begin{prop} If $\delta = \frac{a}{b}$, then
\[
c(\delta) \ge \frac{g(b,a)}{\binom{b}{a}}.
\]
In particular, if $g(b,a) = \binom{b-1}{a-1}$ then $c(\delta) \ge \delta$. If the stronger version of the MMS conjecture proposed in \cite{mms} holds, then $c(\delta) \ge \delta$ for all $\delta \le (1-\delta)^3$.
\end{prop}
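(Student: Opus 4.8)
The plan is to reinterpret the left side of \eqref{secondary} probabilistically and then exploit a two-stage way of sampling the relevant random set. Fix a squarefree $n$ with prime factors $p_1,\dots,p_m$, write $b_i=\log p_i$, and let $T\subseteq\{1,\dots,m\}$ be the random subset that includes each index independently with probability $\delta$. Since a divisor $d\mid n$ corresponds to the subset $T=\{i:p_i\mid d\}$ with $\omega(d)=|T|$ and $\omega(n/d)=m-|T|$, the sum in \eqref{secondary} equals $\PP\big(\sum_{i\in T}b_i\le\delta\sum_{i}b_i\big)$, so it suffices to lower-bound this probability by $g(b,a)/\binom{b}{a}$ for every such $n$.

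The key idea is that the same distribution on $T$ arises from the following three-step procedure: (i) assign each prime $p_i$ to one of $b$ "slots" by choosing $x_i\in\{1,\dots,b\}$ uniformly and independently; (ii) choose a uniformly random $a$-element subset $A\subseteq\{1,\dots,b\}$ of slots, independently of the $x_i$; (iii) set $T=\{i:x_i\in A\}$ --- this works because conditioned on $A$ each index enters $T$ independently with probability $|A|/b=\delta$, a value not depending on $A$. Conditioning on the slot assignment $x$, I would set $B_j=\sum_{i:x_i=j}b_i$, so that $\sum_{j}B_j=\sum_i b_i=:L$ and $\sum_{i\in T}b_i=\sum_{j\in A}B_j$; subtracting the mean, $c_j:=B_j-L/b$ satisfies $\sum_j c_j=0$ and the event in question becomes exactly $\sum_{j\in A}c_j\le 0$. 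Applying the definition of $g$ to the tuple $(-c_1,\dots,-c_b)$ (which sums to $0$) shows that at least $g(b,a)$ of the $\binom{b}{a}$ choices of $A$ satisfy this, uniformly in $x$; averaging over $x$ finishes the main inequality. The special case $g(b,a)=\binom{b-1}{a-1}$ then gives $c(\delta)\ge\binom{b-1}{a-1}/\binom{b}{a}=a/b=\delta$.

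For the final clause I would use that $\delta=ta/(tb)$ for every $t\ge 1$, so $c(\delta)\ge g(tb,ta)/\binom{tb}{ta}$ for all $t$; it thus suffices to verify the hypothesis $\binom{tb-1}{ta-1}\le\binom{tb-3}{ta}$ of the strong MMS conjecture for some $t$. That hypothesis is equivalent to an explicit cubic inequality in $t$ whose leading coefficients compare $(b-a)^3$ with $ab^2$, i.e.\ compare $(1-\delta)^3$ with $\delta$; since a rational $\delta\le(1-\delta)^3$ must in fact satisfy $\delta<(1-\delta)^3$ (the root of $x=(1-x)^3$ is irrational, by the rational root theorem), the inequality holds for all large $t$, and strong MMS then yields $g(tb,ta)=\binom{tb-1}{ta-1}$, hence $c(\delta)\ge\delta$. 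The only step that requires an idea rather than bookkeeping is the two-stage sampling identity in the second paragraph --- once that is in hand there is no real obstacle; the mild asymptotic maneuver in the last step is needed only because the strong-MMS hypothesis is a statement about the particular integers $tb,ta$ rather than about $\delta$ itself, which is why one passes to a large common multiple.
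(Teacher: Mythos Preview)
The paper does not actually prove this proposition; it is stated as an ``easy proposition'' and left without argument. Your proof is correct and complete for the main inequality, and the coupling you use is exactly the right idea: realizing the i.i.d.\ Bernoulli($a/b$) subset $T$ as ``throw each prime into one of $b$ bins uniformly, then keep the primes landing in a uniformly random $a$-subset of bins'' is a clean way to convert the $m$-variable problem (with $m=\omega(n)$ arbitrary) into a $b$-variable problem to which the definition of $g(b,a)$ applies directly. The verification that conditioning on $A$ gives the correct product-Bernoulli law, and that conditioning instead on the bin assignment $x$ yields a zero-sum tuple $(c_1,\dots,c_b)$ with the event becoming $\sum_{j\in A}c_j\le 0$, is all correct; applying the definition of $g$ to $(-c_1,\dots,-c_b)$ gives the bound.

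Two small remarks. First, your treatment of the final clause is fine for rational $\delta$: the reduction of the strong-MMS hypothesis $\binom{tb-1}{ta-1}\le\binom{tb-3}{ta}$ to $ta(tb-1)(tb-2)\le (t(b-a))(t(b-a)-1)(t(b-a)-2)$ and the observation that its leading behavior in $t$ is governed by $(b-a)^3$ versus $ab^2$, together with the rational-root check that $x=(1-x)^3$ has no rational solution, are all correct. The proposition as written quantifies over ``all $\delta\le(1-\delta)^3$'', which on its face includes irrational $\delta$; since the entire proposition is premised on $\delta=a/b$, it is reasonable to read the last sentence as still referring to rational $\delta$, and your argument then suffices. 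Second, your claim that the two-stage sampling identity is ``the only step that requires an idea'' is accurate; once you have it, the rest is routine.
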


In the next section, we prove that
\[
\delta \le \frac{1}{2} \implies c(\delta) \ge \frac{1}{2e^{3/2}}.
\]
In fact, we prove the following stronger claim.

\begin{repcor}{quantile} Let $X_1, ..., X_n$ be independent random variables supported on $\mathbb{N}$ such that for each $i$ the function $k\mapsto\PP[X_i = k]$ is decreasing, and let $w_1, ..., w_n \ge 0$. Let $X = \sum_{i=1}^n w_iX_i$. Then $\PP[X \le \EE[X]] \ge \frac{1}{2e^{3/2}}$.
\end{repcor}

\section{Lower bound on the probability that a random variable is less than its mean}

The arguments in this section are inspired by a MathOverflow post of fedja \cite{fedja}, which used Bernstein's trick in a similar way to solve a closely related problem.

Consider the following property which a random variable $X$ might have:
\begin{align}
\forall t \ge 0\;\;\; &\frac{d^3}{dt^3}\log\left(\EE[e^{-tX}]\right) \le 0.\label{strong}\tag{P}
\end{align}
If independent random variables $X_1, ..., X_n$ all have property \eqref{strong}, and if $w_1, ..., w_n \ge 0$, then the random variable $X = \sum_i w_iX_i$ also has property \eqref{strong} by Bernstein's trick.

Note that when $t = 0$, property \eqref{strong} says that the third cumulant of $X$, $\EE[(X-\EE[X])^3]$, is at least zero.

\begin{thm}\label{low} If a random variable $X$ has property \eqref{strong}, then
\[
\PP[X \le \EE[X]] \ge \frac{1}{2e^{3/2}}.
\]
\end{thm}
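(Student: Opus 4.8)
Replacing $X$ by $X-\mathbb{E}[X]$ changes $\log\mathbb{E}[e^{-tX}]$ only by the affine term $t\,\mathbb{E}[X]$, so both property~\eqref{strong} and the event $\{X\le\mathbb{E}[X]\}$ are unaffected; the statement is also scale invariant. So I would assume $\mathbb{E}[X]=0$ and handle $X\equiv 0$ separately. Put $K(t)=\log\mathbb{E}[e^{-tX}]$ for $t\ge 0$; then $K(0)=0$, $K'(0)=0$, $K$ is convex, and property~\eqref{strong} says exactly that $K''$ is non-increasing on $[0,\infty)$ — the crucial point being that this is a global statement about $K$, much stronger than the $t=0$ special case (nonnegative third cumulant). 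I would first record, with $\sigma^2=K''(0)=\mathrm{Var}(X)$: $0\le K''(t)\le\sigma^2$ and $0\le K'(t)\le\sigma^2 t$; the identity $tK'(t)-K(t)=\int_0^t sK''(s)\,ds$ combined with Chebyshev's integral inequality for the non-increasing functions $K''(s)$ and $t-s$ gives $\tfrac12 tK'(t)\le K(t)\le tK'(t)$; $K(\lambda t)\le\lambda^2 K(t)$ for $\lambda\ge1$; and $\mathbb{P}[X\le -r]\le e^{-r^2/(2\sigma^2)}$ for $r>0$. Since $t\mapsto tK'(t)$ is continuous, vanishes at $0$, and tends to $\infty$, I may later fix $tK'(t)$ at any prescribed positive value.

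\textbf{Tilting.} The engine is exponential tilting: for $t>0$ let $d\mathbb{P}_t/d\mathbb{P}=e^{-tX}/M(t)$ with $M(t)=e^{K(t)}$. Then $\mathbb{E}_t[X]=-K'(t)\le 0$, $\mathrm{Var}_t(X)=K''(t)$, $\mathbb{E}_t[e^{tX}]=1/M(t)$, and the law of $X$ under $\mathbb{P}_t$ again satisfies~\eqref{strong}, since its associated function $s\mapsto K(s+t)-K(t)$ has third derivative $K'''(s+t)\le 0$. Because $e^{tX}\ge e^{-ta}$ on $\{-a\le X\le 0\}$, for every $a>0$ one has $\mathbb{P}[X\le 0]=M(t)\,\mathbb{E}_t[e^{tX}\mathbf{1}_{\{X\le 0\}}]\ge M(t)e^{-ta}\,\mathbb{P}_t[-a\le X\le 0]$. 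Now $\mathbb{P}_t[X>0]=\tfrac{1}{M(t)}\mathbb{E}[e^{-tX}\mathbf{1}_{\{X>0\}}]\le \tfrac{1}{M(t)}\mathbb{P}[X>0]=\tfrac{1-p}{M(t)}$, writing $p=\mathbb{P}[X\le 0]$, while taking $a=K'(t)+\theta\sqrt{K''(t)}$ the sub-Gaussian tail inherited by $\mathbb{P}_t$ gives $\mathbb{P}_t[X<-a]\le e^{-\theta^2/2}=:\varepsilon$. Feeding this back and solving the resulting self-referential inequality for $p$ yields the clean bound
\[
\mathbb{P}[X\le 0]\ \ge\ \frac{(1-\varepsilon)M(t)-1}{e^{ta}-1},\qquad a=K'(t)+\theta\sqrt{K''(t)},\ \ \varepsilon=e^{-\theta^2/2},
\]
in which $M(t)=e^{K(t)}$, $ta=tK'(t)+\theta\,t\sqrt{K''(t)}$, and both $K(t)$ and $t\sqrt{K''(t)}$ are controllable in terms of $tK'(t)$ by the recorded inequalities (e.g.\ $K(t)\ge\tfrac12 tK'(t)$ and $t\sqrt{K''(t)}\le\sqrt{tK'(t)}$).

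\textbf{Optimization and the obstacle.} It remains to choose $t$ (equivalently the value of $tK'(t)$) and $\theta$ to push the right-hand side up to $\tfrac{1}{2e^{3/2}}$; here the "$3$" in $e^{3/2}$ should come out of the fact that the relevant exponent $K(t)-ta$ is governed, via $tK'(t)-K(t)=\int_0^t sK''(s)\,ds$, by a third-order quantity. I expect this last step to be the real difficulty, because every inequality used above is an equality for a Gaussian $X$ — for which $\mathbb{P}[X\le 0]=\tfrac12$ rather than $\tfrac{1}{2e^{3/2}}$ — so a single blunt pass loses a constant factor (the same loss appears in the Paley–Zygmund version $\mathbb{P}[X\le0]\ge (M(t)-1)^2/M(2t)\ge(M(t)-1)^2/M(t)^4$, which only yields $\tfrac1{16}$, and in the equivalent observation that the parabola $z\mapsto-\tfrac{4}{(e^{ta}-1)^2}(z-1)(z-e^{ta})$ lies below $\mathbf{1}_{[1,e^{ta}]}$, giving $\mathbb{P}[X\le0]\ge 4[(1+e^{ta})M(t)-e^{ta}-M(2t)]/(e^{ta}-1)^2$). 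To recover the sharp constant I would either split according to how fast $K''$ decays — treating the near-Gaussian regime ($K''$ nearly constant, where $\mathbb{P}[X\le 0]$ is in fact large) separately from the regime of an atom near the essential infimum of $X$ (where the prefactor $M(t)e^{-ta}$ stays near $e^{-3/2}$ and $\mathbb{P}_t[-a\le X\le 0]$ near $\tfrac12$) — or, more in the spirit of \cite{fedja}, exploit the fact that each tilted law $\mathbb{P}_t$ is again a property-\eqref{strong} law to bootstrap whatever constant the first estimate produces up to the fixed point $\tfrac{1}{2e^{3/2}}$ of the self-improving inequality this generates.
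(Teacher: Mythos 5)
Your setup is sound: the reduction to $\EE[X]=0$, the consequences $\tfrac12 tK'(t)\le K(t)\le tK'(t)$ and $t^2K''(t)\le tK'(t)$ of the monotonicity of $K''$, the fact that tilted laws inherit \eqref{strong}, and the self-referential inequality leading to $\PP[X\le 0]\ge\bigl((1-\varepsilon)M(t)-1\bigr)/(e^{ta}-1)$ are all correct. But the proof is not finished, and the gap is not a routine optimization. With the only quantitative controls you have retained --- $M(t)\ge e^{tK'(t)/2}$ and $ta\le tK'(t)+\theta\sqrt{tK'(t)}$ --- your ``clean bound'' is at most
\[
\sup_{c,\theta>0}\ \frac{\bigl(1-e^{-\theta^2/2}\bigr)e^{c/2}-1}{e^{\,c+\theta\sqrt{c}}-1},
\]
and a numerical check shows this supremum is about $0.026$ (attained near $c\approx 0.3$, $\theta\approx 2.8$), which misses $\tfrac{1}{2e^{3/2}}\approx 0.112$ by a factor of roughly $4$. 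You acknowledge this in your last paragraph, but neither repair strategy you sketch (splitting on the decay rate of $K''$, or bootstrapping through the tilted laws) is carried out, and it is not evident that either converges to the stated constant rather than to a weaker fixed point. As written, this is a program rather than a proof, and the concrete route you do complete provably cannot reach the constant.

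The paper's argument needs neither tilting nor tail bounds. Writing $Y=\EE[X]-X$ and $g(t)=\log\EE[e^{tY}]$, it uses exactly the two inequalities you already have, $tg''(t)\le g'(t)$ and $tg'(t)\le 2g(t)$, but evaluates them at a single point $t$ chosen so that $tg'(t)=1$, i.e.\ $\EE[tYe^{tY}]=\EE[e^{tY}]=e^{g(t)}$ with $g(t)\ge\tfrac12$. The first inequality, rewritten as $\EE[(tY)^2e^{tY}]\le 2\,\EE[tYe^{tY}]$, then gives for $p(x)=x(8-3x)e^x$ the lower bound $\EE[p(tY)]\ge 8e^{g(t)}-6e^{g(t)}\ge 2e^{1/2}$; since $p(x)\le 0$ for $x\le 0$ and $p(x)\le p(2)=4e^2$ for all $x$, Markov's inequality yields $\PP[Y\ge 0]\ge 2e^{1/2}/(4e^2)=\tfrac{1}{2e^{3/2}}$. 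The missing idea in your writeup is thus to compare $\mathbf{1}_{\{Y\ge 0\}}$ against a single explicit majorant of a weighted moment (here $p(tY)/(4e^2)$), rather than assembling separately optimized estimates each of which is lossy; if you want to complete your approach you would need to replace the product of crude bounds in the last step by something of this one-shot form.
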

\begin{proof} Let $Y = \EE[X] - X$, and define the function $g : \mathbb{R}_{\ge 0} \rightarrow \mathbb{R}$ by
\[
g(t) = \log\left(\EE[e^{tY}]\right).
\]
Property \eqref{strong} says that $g''(t)$ is decreasing for $t\ge 0$. Since $\EE[Y] = 0$, we also have $g'(0) = 0$, and so for any $t \ge 0$ we have
\[
tg''(t) \le \int_0^t g''(x)dx = g'(t).
\]
Integrating this we see that $tg'(t) \le 2g(t)$. The inequality $tg''(t) \le g'(t)$ is easily seen to be equivalent to
\[
\frac{\EE[(tY)^2e^{tY}]}{\EE[tYe^{tY}]} \le \frac{\EE[tYe^{tY}]}{\EE[e^{tY}]} + 1.
\]
If $g$ is not identically $0$ then we can find $t$ such that $tg'(t) = 1$, or equivalently such that $\EE[tYe^{tY}] = \EE[e^{tY}] = e^{g(t)}$. For this $t$ we have
\[
\EE\left[tY\left(8-3tY\right)e^{tY}\right] \ge \left(8-6\right)e^{g(t)} \ge 2e^{1/2}.
\]
The function $p(x) = x(8-3x)e^x$ has $p(x) \le 0$ for $x \le 0$ and $p(x) \le p(2) = 4e^2$ for all $x$, so by Markov's inequality
\[
\PP[X \le \EE[X]] = \PP[Y \ge 0] \ge \frac{1}{2e^{3/2}}.\qedhere
\]
\end{proof}

\begin{thm}\label{decrease} Suppose that the random variable $X$ is supported on $\mathbb{N}$, with $\PP[X = k]$ a decreasing function of $k$. Then $X$ satisfies property \eqref{strong}.
\end{thm}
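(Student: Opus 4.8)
The plan is to convert property \eqref{strong} into a statement about a single quantity --- the third cumulant --- and then to prove that statement by a mixture representation. Fix $t_0 \ge 0$ and let $X^{(t_0)}$ be the random variable obtained from $X$ by exponential tilting, so that $\PP[X^{(t_0)} = k]$ is proportional to $\PP[X = k]\,e^{-t_0 k}$. Since $\PP[X = k]$ and $e^{-t_0 k}$ are both non-increasing in $k$, their product is non-increasing, so $X^{(t_0)}$ again has a non-increasing probability mass function; and for $t_0 > 0$ this mass function has an exponential tail, so $X^{(t_0)}$ has moments of all orders. Using the factorization $\EE[e^{-tX}] = \EE[e^{-t_0 X}]\cdot\EE\big[e^{-(t - t_0)X^{(t_0)}}\big]$ together with the elementary identity $\frac{d^3}{ds^3}\log\EE[e^{-sY}]|_{s=0} = -\EE[(Y - \EE Y)^3]$, one gets
\[
\frac{d^3}{dt^3}\log\EE[e^{-tX}]\Big|_{t = t_0} \;=\; -\,\EE\big[(X^{(t_0)} - \EE X^{(t_0)})^3\big].
\]
Hence it is enough to prove: \emph{any random variable $Z$ supported on the non-negative integers, with non-increasing probability mass function and finite third moment, has non-negative third cumulant.} This yields property \eqref{strong} for all $t_0 > 0$ (taking $Z = X^{(t_0)}$), and for $t_0 = 0$ by letting $t \to 0^+$.

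To prove the displayed claim, I would decompose $Z$ as a mixture of discrete uniform distributions. Writing $p_k = \PP[Z = k]$ and $a_m = p_m - p_{m+1} \ge 0$, the telescoping identities $p_k = \sum_{m \ge k} a_m$ and $\sum_{m \ge 0}(m+1)a_m = 1$ --- the latter using that $(m+1)p_{m+1} \to 0$, a standard consequence of $p$ being non-increasing and summable --- show that $Z$ has the same distribution as $U_M$, where $M$ is a random variable on $\{0,1,2,\dots\}$ with $\PP[M = m] = (m+1)a_m$ and, conditionally on $M = m$, the variable $U_m$ is uniform on $\{0,1,\dots,m\}$. This is precisely where the hypothesis is used: monotonicity of $p$ is what makes the mixing weights $(m+1)a_m$ non-negative. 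From $\EE[Z^3] < \infty$ one checks that $\EE[M^3] < \infty$ as well.

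Now it remains to compute. Each $U_m$ is symmetric about $m/2$, so its third central moment vanishes, and $\EE[U_m] = \tfrac m2$, $\EE[U_m^2] = \tfrac{m(2m+1)}{6}$, $\EE[U_m^3] = \tfrac{m^2(m+1)}{4}$. Conditioning on $M$ and writing $a = \EE[M]$, $b = \EE[M^2]$, $c = \EE[M^3]$, we get $\EE[Z] = \tfrac a2$, $\EE[Z^2] = \tfrac{2b+a}{6}$, $\EE[Z^3] = \tfrac{b+c}{4}$, and therefore
\begin{align*}
\EE\big[(Z - \EE Z)^3\big]
&= \EE[Z^3] - 3\,\EE[Z^2]\,\EE[Z] + 2\,\EE[Z]^3 \\
&= \tfrac14\big(c - 2ab + a^3\big) + \tfrac14\big(b - a^2\big) \\
&= \tfrac14\,\EE\big[M(M - \EE M)^2\big] + \tfrac14\operatorname{Var}(M),
\end{align*}
where the last step uses the identity $\EE[M^3] - 2\EE[M]\EE[M^2] + \EE[M]^3 = \EE[M(M - \EE M)^2]$. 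Both terms on the right are non-negative --- the first because $M \ge 0$, the second trivially --- so the third cumulant of $Z$ is non-negative, which proves the reduced claim and hence Theorem \ref{decrease}.

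I do not expect a serious obstacle here: the argument is short, and its only real idea is the representation of a monotone discrete distribution as a mixture of uniform distributions. The point that most needs care is the interaction with integrability and with $t_0 = 0$ --- a non-increasing mass function on the non-negative integers need not have even a finite first moment --- which is why I would phrase the core lemma with a finite-third-moment hypothesis and deduce property \eqref{strong} unconditionally for $t > 0$ and by a limit at $t = 0$. As a sanity check one can run the identical computation in the continuous setting, where a non-increasing density on $[0,\infty)$ is a mixture of uniform densities on intervals $[0,L]$ and the third cumulant comes out to the cleaner $\tfrac14\EE[L(L - \EE L)^2]$; transferring this back to $\mathbb{N}$ gives the same conclusion, but with no real saving over the direct discrete computation above.
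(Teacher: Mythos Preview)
Your argument is correct and genuinely different from the paper's. The paper expands property~\eqref{strong} directly: writing $a_k = \PP[X=k]$ and $x = e^{-t}$, the inequality becomes $\sum_{i,j,k} a_ia_ja_k x^{i+j+k}(i^3+2ijk-3i^2j)\ge 0$, which the author then rewrites by hand as a sum of terms of the form $a_ia_jx^{i+j}(a_ix^i-a_jx^j)(j-i)^3$ and $a_ia_kx^{i+k}(a_jx^j-a_{i+k-j}x^{i+k-j})(i+k-2j)(j+k-2i)(2k-i-j)$, each of which is visibly non-negative when $a_0\ge a_1\ge\cdots$ and $0\le x\le 1$. Your route is more conceptual: exponential tilting reduces property~\eqref{strong} at every $t$ to the single statement that a monotone pmf has non-negative third cumulant, and the mixture-of-uniforms representation $Z\sim U_M$ yields the clean identity $\kappa_3(Z)=\tfrac14\EE[M(M-\EE M)^2]+\tfrac14\operatorname{Var}(M)\ge 0$. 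Your approach explains \emph{why} the inequality holds and transfers immediately to the continuous case, whereas the paper's regrouping is shorter on the page and sidesteps all integrability bookkeeping (no tilting, no limits at $t=0$, no moment hypotheses) at the cost of producing a decomposition that looks pulled from a hat.
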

\begin{proof} Expanding property \eqref{strong}, it becomes
\[
\EE[X^3e^{-tX}]\EE[e^{-tX}]^2 + 2\EE[Xe^{-tX}]^3 \ge 3\EE[X^2e^{-tX}]\EE[Xe^{-tX}]\EE[e^{-tX}].
\]
Setting $a_k = \PP[X = k]$ and $x = e^{-t}$ we get the polynomial inequality
\[
\sum_{i,j,k} a_ia_ja_kx^{i+j+k}(i^3 + 2ijk - 3i^2j) \ge 0,
\]
which we need to check for $a_0 \ge a_1 \ge \cdots \ge 0$ and $1 \ge x \ge 0$. The left hand side of the above is equal to
\[
\sum_{i < j} a_ia_jx^{i+j}(a_ix^i - a_jx^j)(j-i)^3 + \sum_{i<j<\frac{i+k}{2}} a_ia_kx^{i+k}(a_jx^j - a_{i+k-j}x^{i+k-j})(i+k-2j)(j+k-2i)(2k-i-j),
\]
which is obviously nonnegative.
\end{proof}

\begin{cor}\label{quantile} Let $X_1, ..., X_n$ be independent random variables supported on $\mathbb{N}$ such that for each $i$ the function $k\mapsto\PP[X_i = k]$ is decreasing, and let $w_1, ..., w_n \ge 0$. Let $X = \sum_{i=1}^n w_iX_i$. Then $\PP[X \le \EE[X]] \ge \frac{1}{2e^{3/2}}$.
\end{cor}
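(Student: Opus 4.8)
The plan is to assemble the three ingredients already in place: Theorem~\ref{decrease}, the Bernstein-trick closure property recorded just before Theorem~\ref{low}, and Theorem~\ref{low} itself. First I would invoke Theorem~\ref{decrease} to conclude that each $X_i$ individually has property~\eqref{strong}, since each is supported on $\mathbb{N}$ with $k\mapsto\PP[X_i=k]$ decreasing.

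Next I would check that scaling by a nonnegative weight preserves property~\eqref{strong}. Writing $h_i(t) = \log\!\left(\EE[e^{-tX_i}]\right)$, we have $\log\!\left(\EE[e^{-tw_iX_i}]\right) = h_i(w_it)$, whose third derivative in $t$ equals $w_i^3\,h_i'''(w_it) \le 0$ because $w_i \ge 0$ and $h_i''' \le 0$ by property~\eqref{strong} (the case $w_i = 0$ being trivial). Hence each $w_iX_i$ has property~\eqref{strong}. Then, by independence, $\log\!\left(\EE[e^{-tX}]\right) = \sum_i h_i(w_it)$, and differentiating three times gives $\sum_i w_i^3\,h_i'''(w_it) \le 0$; this is exactly the Bernstein-trick observation stated in the text, so $X$ has property~\eqref{strong}.

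Finally I would apply Theorem~\ref{low} to $X$ to obtain $\PP[X \le \EE[X]] \ge \frac{1}{2e^{3/2}}$. If $\EE[X] = \infty$ the claim is vacuous, so one may harmlessly assume the relevant expectations are finite. I do not expect any genuine obstacle: all the real content lives in Theorems~\ref{low} and~\ref{decrease}, and the only point requiring any care is the bookkeeping for the weights $w_i$, handled by the elementary derivative computation above.
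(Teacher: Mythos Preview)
Your proposal is correct and matches the paper's intended argument exactly: the corollary is stated without proof precisely because it follows immediately from Theorem~\ref{decrease}, the Bernstein-trick closure property noted before Theorem~\ref{low}, and Theorem~\ref{low} itself. Your explicit verification that nonnegative scaling preserves property~\eqref{strong} via $\frac{d^3}{dt^3}h_i(w_it) = w_i^3 h_i'''(w_it)$ is a welcome clarification of what the paper leaves implicit.
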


\begin{cor}\label{soundy} Let $\delta \le \frac{1}{2}$, and let $f:\mathbb{N}\rightarrow [0,\infty)$ be a nonnegative multiplicative function such that for every prime $p$ we have $\frac{f(p)}{f(p)+1} \le \delta$. Then for any squarefree number $n$ we have
\[
\sum_{\substack{d\mid n\\ d\le n^{\delta}}} f(d) \ge \frac{1}{2e^{3/2}}\sum_{d\mid n} f(d).
\]
\end{cor}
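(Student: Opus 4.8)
The plan is to deduce Corollary~\ref{soundy} from Corollary~\ref{quantile} by choosing the right random variables. The key observation is that for a squarefree $n = p_1\cdots p_r$, the sum $\sum_{d\mid n} f(d) = \prod_i (1 + f(p_i))$ factors as a product over primes, and the condition $d \le n^\delta$ is the condition $\log d \le \delta \log n$, which is linear in $\log d$. So I would first normalize: set $F = \prod_i (1+f(p_i))$, and for each $i$ let $X_i$ be the random variable on $\{0,1\}$ with $\PP[X_i = 1] = f(p_i)/(1+f(p_i))$ and $\PP[X_i = 0] = 1/(1+f(p_i))$. The hypothesis $f(p_i)/(1+f(p_i)) \le \delta \le \tfrac12$ guarantees $\PP[X_i = 0] \ge \PP[X_i = 1]$, so $k \mapsto \PP[X_i = k]$ is decreasing on $\mathbb{N}$ (with $\PP[X_i = k] = 0$ for $k \ge 2$). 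Thus each $X_i$ satisfies the hypotheses of Corollary~\ref{quantile}.

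Next I would set the weights $w_i = \log p_i$ and $X = \sum_i w_i X_i = \sum_i (\log p_i) X_i$. If $X_1,\ldots,X_r$ are independent, then the distribution of $X$ is exactly the distribution that picks a divisor $d \mid n$ with probability $f(d)/F$ and records $\log d$: indeed $\PP[X = \log d] = \prod_{p_i \mid d} \frac{f(p_i)}{1+f(p_i)} \prod_{p_i \nmid d} \frac{1}{1+f(p_i)} = f(d)/F$. Consequently $\sum_{d\mid n,\ d \le n^\delta} f(d) = F \cdot \PP[X \le \delta \log n]$. The remaining task is to show $\EE[X] \le \delta \log n$, for then Corollary~\ref{quantile} gives $\PP[X \le \delta\log n] \ge \PP[X \le \EE[X]] \ge \frac{1}{2e^{3/2}}$, and multiplying through by $F$ finishes the proof.

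The bound $\EE[X] \le \delta\log n$ is the one genuinely substantive step, though it is short: $\EE[X] = \sum_i (\log p_i)\,\PP[X_i = 1] = \sum_i (\log p_i)\,\frac{f(p_i)}{1+f(p_i)} \le \delta \sum_i \log p_i = \delta \log n$, using the hypothesis $\frac{f(p_i)}{1+f(p_i)} \le \delta$ term by term. The only minor obstacle is the edge case: the event $\{X \le \EE[X]\}$ versus $\{X \le \delta \log n\}$ — since $\EE[X] \le \delta\log n$ the former event is contained in the latter, so the inequality goes the right way. (One should also note that if some $f(p_i) = 0$ the corresponding factor is trivial, and if $n = 1$ the statement is vacuous.) I expect no real difficulty here; the entire content has been pushed into Corollary~\ref{quantile}, and this corollary is just the translation of a sum over divisors of a squarefree number into a weighted sum of independent decreasing $\{0,1\}$-valued random variables.
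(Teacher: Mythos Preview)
Your proposal is correct and is exactly the intended deduction: the paper states Corollary~\ref{soundy} without proof as an immediate consequence of Corollary~\ref{quantile}, via precisely the translation you describe (independent $\{0,1\}$-valued $X_i$ with $\PP[X_i=1]=f(p_i)/(1+f(p_i))$, weights $w_i=\log p_i$, and the observation $\EE[X]\le\delta\log n$). There is nothing to add.
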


\section{A lower bound for the entropies of certain probability distributions}

Let $X$ be a random variable supported the set $\{0, ..., m\}$, with probability distribution $\rho = (\rho_0, ..., \rho_m)$. We define the entropy of $X$ to be
\[
H(X) = \sum_{i=0}^m \rho_i\log_2\left(\frac{1}{\rho_i}\right).
\]

In the next section, we will need the existence of a random variable $X$ as above with $\EE[X] = \delta m$ given and $H(X)$ large. It's a well-known fact that the optimal choice of $X$ will be geometrically distributed. Unfortunately the entropy of a geometric distribution on a finite set, as a function of the mean, is quite complicated and directly proving a lower bound is rather difficult. Instead, we will inductively construct probability distributions which are simpler to analyze and still have sufficiently large entropy.

\begin{lem}\label{ent} For every $m \ge 1$ and every $0 \le \delta \le 1$ there is a random variable $X$ supported on the set $\{0, ..., m\}$ which has mean $\delta m$ and entropy satisfying the inequality
\[
H(X) \ge \log_2(m+1)H(\delta).
\]
\end{lem}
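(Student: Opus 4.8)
The plan is to prove Lemma~\ref{ent} by induction on $m$, using at each step a probability distribution that is much simpler to analyze than the geometric one: namely, a mixture that puts some mass on a single atom and spreads the rest over a shorter interval. The base case $m=1$ is immediate, since a random variable on $\{0,1\}$ with mean $\delta$ is Bernoulli$(\delta)$ and has entropy exactly $H(\delta) = \log_2(2)H(\delta)$.

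For the inductive step, suppose the claim holds for all values up to $m-1$. Given $m$ and $\delta$, I would first handle $\delta \le \tfrac12$ (the case $\delta \ge \tfrac12$ following by the symmetry $X \mapsto m - X$, which fixes the entropy and sends $\delta \mapsto 1-\delta$). The idea is to write $X$ as follows: with probability $q$ put $X$ at the atom $m$, and with probability $1-q$ let $X$ be distributed on $\{0,\dots,m-1\}$ according to an inductively-constructed random variable $X'$ with some mean $\delta' (m-1)$. Matching the mean forces $qm + (1-q)\delta'(m-1) = \delta m$, and the entropy decomposes as
\[
H(X) = H(q) + (1-q)H(X') \ge H(q) + (1-q)\log_2(m)\,H(\delta').
\]
So the whole problem reduces to choosing the free parameter (say $q$, with $\delta'$ then determined, or vice versa) so that
\[
H(q) + (1-q)\log_2(m)\,H(\delta') \ge \log_2(m+1)\,H(\delta).
\]
The natural, clean choice is to take $q = \delta^{?}$ so that the split mirrors the recursive structure of $H$ itself; concretely I expect the right choice makes $X$ essentially the geometric distribution, for which $q = \delta$ (so the atom at $m$ gets mass $\delta$ and, conditioned on $X < m$, the variable is again geometric on $\{0,\dots,m-1\}$), giving $\delta'(m-1) = \delta(m-1)$, i.e.\ $\delta' = \delta$. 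Then the inequality to verify becomes
\[
H(\delta) + (1-\delta)\log_2(m)\,H(\delta) \ge \log_2(m+1)\,H(\delta),
\]
i.e.\ $1 + (1-\delta)\log_2(m) \ge \log_2(m+1)$, which fails for small $\delta$ — so the geometric split is \emph{not} the one to use in this induction, and one must instead optimize $q$ (equivalently $\delta'$) as a genuine free parameter.

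The main obstacle, then, is the choice of the parameter and the verification of the resulting single-variable inequality. I would set up the problem as: minimize $F(q) := H(q) + (1-q)\log_2(m)\,H\!\left(\tfrac{\delta m - qm}{(1-q)(m-1)}\right)$ over admissible $q$ (those keeping the argument of $H$ in $[0,1]$), and show the minimum is $\ge \log_2(m+1)H(\delta)$. Because $H$ is concave and $\log_2(m+1) = \log_2 m + \log_2(1 + 1/m)$, one expects the extremal $q$ to be a specific explicit function of $\delta$ and $m$ — likely $q$ chosen so that $\delta' = \delta$ is \emph{wrong} as noted, and instead $q$ chosen to equalize some marginal-entropy condition, $\tfrac{\partial F}{\partial q} = 0$, which after differentiating $H$ gives a transcendental relation. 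The cleanest path may be to avoid calculus entirely: guess $q$ from the known answer for the true geometric distribution's entropy expansion and instead prove the inequality by a clever application of concavity of $H$ (e.g.\ Jensen applied to the two-term mixture, comparing against the value at a well-chosen intermediate point). I expect the hard part to be precisely pinning down this parameter and checking that the one-variable inequality it produces holds for \emph{all} $m \ge 1$ and $\delta \in [0,\tfrac12]$, with the strictness claim of Corollary~\ref{geo} then following from strict concavity of $H$ away from $\delta \in \{0,\tfrac12,1\}$ whenever $m > 1$ forces at least one nontrivial mixing step.
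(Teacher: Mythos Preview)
Your inductive skeleton is exactly the paper's, but the proposal stops short of the one idea that makes the induction close, and in the part you did write out there is an arithmetic slip. With the atom at $m$ and $q=\delta$, the mean constraint $qm+(1-q)\delta'(m-1)=\delta m$ forces $\delta'=0$, not $\delta'=\delta$; so the failed inequality you derived is not the one coming from the geometric split. More importantly, you never pin down the ``right'' $q$, and you say so yourself: the proposal ends with ``I expect the hard part to be precisely pinning down this parameter\ldots''. That is indeed the whole content of the lemma, so as it stands this is an outline rather than a proof.

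The missing idea is simple and removes the need for any optimization. Put the atom at $0$ rather than at $m$ (for $\delta\le\tfrac12$ the mass wants to sit at the small end), and choose the mixing weight so that the \emph{same} $\delta$ recurs: take $X=0$ with probability $\frac{1-\delta}{m\delta+1-\delta}$ and $X=1+Y$ otherwise, where $Y$ lives on $\{0,\dots,m-1\}$ with mean $\delta(m-1)$. Then the inductive hypothesis applies to $Y$ with the \emph{unchanged} parameter $\delta$, and the step reduces to the explicit two-variable inequality
\[
H\!\left(\frac{1-\delta}{m\delta+1-\delta}\right)+\frac{m\delta}{m\delta+1-\delta}\,\log_2(m)\,H(\delta)\ \ge\ \log_2(m+1)\,H(\delta),
\]
which one then verifies by a monotonicity-in-$m$ argument that eventually collapses to a single-variable inequality in $\delta$. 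Your atom-at-$m$ setup is the mirror image of this under $\delta\leftrightarrow 1-\delta$, so it would yield the correct inequality only in the range $\delta\ge\tfrac12$; for $\delta\le\tfrac12$ you want the atom at $0$.
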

\begin{proof} It's enough to prove this for $0 < \delta \le \frac{1}{2}$. We proceed by induction on $m$. The case $m = 1$ is immediate. For $m > 1$, we let $Y$ be a random variable on the set $\{0, ..., m-1\}$ with mean $\delta(m-1)$, satisfying $H(Y) \ge \log_2(m)H(\delta)$. Define $X$ to be $0$ with probability $\frac{1-\delta}{m\delta+1-\delta}$, and to be $1+Y$ with probability $\frac{m\delta}{m\delta+1-\delta}$. Then
\[
\mathbb{E}[X] = \frac{m\delta}{m\delta+1-\delta}\left(1+\mathbb{E}[Y]\right) = \frac{m\delta}{m\delta+1-\delta}(1+\delta(m-1)) = \delta m,
\]
and
\[
H(X) = H\left(\frac{1-\delta}{m\delta+1-\delta}\right) + \frac{m\delta}{m\delta+1-\delta}H(Y) \ge H\left(\frac{1-\delta}{m\delta+1-\delta}\right) + \frac{m\delta}{m\delta+1-\delta}\log_2(m)H(\delta).
\]
It suffices to show that the right hand side of the above is at least $\log_2(m+1)H(\delta)$.

Making the change of variables $x = \frac{1-\delta}{m\delta}$, we just need to show
\[
H\left(\frac{1}{x+1}\right) \ge \left(\log_2(m+1) - \frac{\log_2(m)}{x+1}\right)H\left(\frac{1}{mx+1}\right)
\]
for real numbers $m, x$ satisfying $m \ge 1$ and $mx \ge 1$. Since we clearly have equality when $m = 1$ or $m = \frac{1}{x}$, it is enough to show that the right hand side is a decreasing function of $m$. Taking the derivative with respect to $m$, using the identity $H'(\delta) = \log_2\left(\frac{1-\delta}{\delta}\right)$, we see that we just need to check
\[
\left(\log_2(m+1) - \frac{\log_2(m)}{x+1}\right)\frac{x\log_2(mx)}{(mx+1)^2} \ge \log_2(e)\left(\frac{1}{m+1}-\frac{1}{m+mx}\right)H\left(\frac{1}{mx+1}\right).
\]
Changing variables back to $m, \delta$ and rearranging, this becomes
\[
(1-\delta)\left(1+\frac{1}{m}\right)\log_2(m+1) + \delta(m+1)\log_2\left(1+\frac{1}{m}\right) \ge \frac{\log_2(e)(1-2\delta)H(\delta)}{\delta(1-\delta)\log_2\left(\frac{1-\delta}{\delta}\right)}.
\]
From $(1-\delta) \ge \delta$ and $m \ge 1$ we easily deduce that the left hand side is at least $2$, so it is enough to prove the single variable inequality
\[
2\delta(1-\delta)\frac{\log(1-\delta)-\log(\delta)}{(1-\delta)-\delta} \ge H(\delta),
\]
where the logarithms on the left hand side are taken to the base $e$. We leave this inequality as an exercise for the reader.
\end{proof}

\begin{cor}\label{geo} If $X$ is geometrically distributed on $\{0, ..., m\}$ with mean $\delta m$ then
\[
H(X) \ge \log_2(m+1)H(\delta),
\]
and the inequality is strict if $m > 1$ and $\delta \not\in \{0, \frac{1}{2}, 1\}$.
\end{cor}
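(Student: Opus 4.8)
The plan is to combine Lemma~\ref{ent} with the classical fact that the geometric distribution maximizes entropy among distributions on $\{0,\dots,m\}$ with a given mean. Recall that, for a prescribed mean $\mu$, the entropy of a distribution $\sigma=(\sigma_0,\dots,\sigma_m)$ on $\{0,\dots,m\}$ with $\sum_i i\sigma_i=\mu$ is maximized \emph{uniquely} by the distribution $\rho$ with $\rho_i$ proportional to $r^i$, where $r>0$ is the unique value making the mean equal to $\mu$; this $\rho$ is exactly the truncated geometric distribution with mean $\mu$. The proof is the usual Gibbs argument: for such a $\rho$ the quantity $\log_2\rho_i$ is an affine function of $i$, say $\log_2\rho_i=\alpha+\beta i$, so for any distribution $\sigma$ on $\{0,\dots,m\}$ with $\sum_i i\sigma_i=\mu$ we get $-\sum_i\sigma_i\log_2\rho_i=-\alpha-\beta\mu=-\sum_i\rho_i\log_2\rho_i=H(\rho)$, and hence
\[
H(\rho)-H(\sigma)=-\sum_i\sigma_i\log_2\rho_i+\sum_i\sigma_i\log_2\sigma_i=\sum_i\sigma_i\log_2\frac{\sigma_i}{\rho_i}\ge 0,
\]
with equality if and only if $\sigma=\rho$.

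First I would apply this with $\mu=\delta m$. Lemma~\ref{ent} produces \emph{some} random variable $X'$ on $\{0,\dots,m\}$ with mean $\delta m$ and $H(X')\ge\log_2(m+1)H(\delta)$. Letting $X$ be the geometric variable on $\{0,\dots,m\}$ with mean $\delta m$, the maximum-entropy property gives $H(X)\ge H(X')\ge\log_2(m+1)H(\delta)$, which is the claimed inequality.

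For the strictness, suppose $m>1$ and $\delta\notin\{0,\tfrac{1}{2},1\}$. Instead of tracking strictness through the string of estimates in the proof of Lemma~\ref{ent}, I would invoke \emph{uniqueness} of the maximizer: it is enough to verify that the explicit $X'$ built in that proof is not geometric for these parameters, for then $H(X)>H(X')\ge\log_2(m+1)H(\delta)$. Unwinding the recursion defining $X'$ gives $\PP[X'=0]=\frac{1-\delta}{m\delta+1-\delta}$, and a short computation (immediate when $m=2$, and obtained by unwinding the recursion twice when $m\ge 3$) yields
\[
\frac{\PP[X'=1]}{\PP[X'=0]}=\frac{m\delta}{(m-1)\delta+1-\delta},\qquad \frac{\PP[X'=2]}{\PP[X'=1]}=\frac{(m-1)\delta}{(m-2)\delta+1-\delta}.
\]
A geometric distribution would have these two ratios equal; equating them and clearing denominators leaves $1-2\delta=0$. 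So $X'$ is not geometric when $m>1$ and $\delta\notin\{0,\tfrac{1}{2},1\}$, which gives the strict inequality.

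I do not expect a serious obstacle. The only point demanding a little care is the strictness: one should avoid re-examining each inequality in the proof of Lemma~\ref{ent} and instead use uniqueness of the entropy maximizer, which reduces everything to the elementary observation that the constructed distribution $X'$ is geometric precisely when $\delta\in\{0,\tfrac{1}{2},1\}$.
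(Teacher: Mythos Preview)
Your approach is the same as the paper's: combine Lemma~\ref{ent} with the maximum-entropy characterization of the truncated geometric distribution. The paper's proof is a single sentence and does not spell out the strictness, so your argument via uniqueness of the maximizer together with the explicit check that the recursively constructed $X'$ fails to be geometric when $m>1$ and $\delta\notin\{0,\tfrac12,1\}$ is a correct and welcome elaboration (by the symmetry $X\mapsto m-X$, $\delta\mapsto 1-\delta$ it suffices to treat $0<\delta<\tfrac12$, where your ratio computation applies).
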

\begin{proof} This follows from the previous lemma together with the well-known fact that a geometric distribution has the maximum entropy among all distributions on a finite set which have a given mean.
\end{proof}

\begin{rem} In the case $m+1 = 2^k$ we can give a much simpler proof of Lemma \ref{ent}. Let $B_0, ..., B_{k-1}$ be i.i.d. random variables which are each $0$ with probability $1-\delta$ and $1$ with probability $\delta$. Then if we take
\[
X = \sum_{i=0}^{k-1} 2^iB_i,
\]
we have $\mathbb{E}[X] = \delta m$ and $H(X) = kH(\delta)$. This probability distribution corresponds to a trick used by Wolke in \cite{wolke}.
\end{rem}

\section{Divisor sum inequalities}

\begin{thm}\label{result} If $0 < \delta \le \frac{1}{2}$, $\beta,s \ge 0$ satisfy
\[
\beta > \frac{s - H(\delta)}{\delta},
\]
then
\[
\tau(n)^s \ll_{\beta,\delta,s} \sum_{\substack{d\mid n\\ d\le n^{\delta}}} \tau(d)^{\beta}.
\]
\end{thm}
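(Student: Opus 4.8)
The plan is to write $n = \prod_p p^{e_p}$ and build a random divisor $d \mid n$ by choosing, independently for each prime $p \mid n$, an exponent $a_p \in \{0, \dots, e_p\}$ from a distribution of the type furnished by Lemma~\ref{ent}: namely one with mean $\delta e_p$ and entropy at least $\log_2(e_p+1)H(\delta) = H(\delta)\log_2\tau(p^{e_p})$. Setting $d = \prod_p p^{a_p}$, independence gives $H(d) = \sum_p H(a_p) \ge H(\delta)\sum_p \log_2\tau(p^{e_p}) = H(\delta)\log_2\tau(n)$, while $\EE[\log d] = \sum_p \EE[a_p]\log p = \delta \sum_p e_p \log p = \delta\log n$.

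Next I would run the standard entropy-versus-support dichotomy. On the one hand, since $d$ is supported on the at most $\tau(n)$ divisors of $n$, we have $H(d) \le \log_2\tau(n)$; combined with the entropy lower bound this forces the distribution to be spread out. Quantitatively, I want to say that a positive proportion of the entropy comes from divisors $d \le n^\delta$: write $H(d) = \sum_{d \mid n} \Rho[d]\log_2(1/\Rho[d])$ and split according to whether $d \le n^\delta$. For the large divisors $d > n^\delta$, I would bound their total contribution to the entropy by $(\text{their mass})\cdot\log_2\tau(n) + (\text{binary entropy of that mass})$; since $\EE[\log d] = \delta\log n$, Markov's inequality controls $\Rho[d > n^\delta] \le 1$ trivially, so instead I would use a slightly sharper tail bound, or more cleanly bound the whole of $H(d)$ by $\sum_{d \le n^\delta}\Rho[d]\log_2(1/\Rho[d])$ plus the contribution of $\{d > n^\delta\}$ estimated via $\Rho[d]\log_2(1/\Rho[d]) \le \Rho[d]\log_2 d \le \Rho[d]\log_2 n$ is too weak — the right move is to compare $H(d)$ with $\EE[\log_2 d]$ directly. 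Indeed $H(d) \le \EE[\log_2 d] + \log_2(\text{number of divisors} \le \text{any fixed cutoff})$ is the wrong direction; instead one uses that among divisors $\le n^\delta$ there are at least some number $N_\delta$ of them, and $H(d \mid d \le n^\delta) \le \log_2 N_\delta$, so

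\[
H(\delta)\log_2\tau(n) \le H(d) \le \Rho[d \le n^\delta]\log_2 N_\delta + \Rho[d \le n^\delta]\log_2\tfrac{1}{\Rho[d\le n^\delta]} + \Rho[d > n^\delta]\bigl(\log_2\tau(n) + \log_2\tfrac{1}{\Rho[d>n^\delta]}\bigr),
\]

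which after rearranging and using $1 - \Rho[d \le n^\delta] = \Rho[d > n^\delta]$ yields a lower bound of the shape $N_\delta \gg \tau(n)^{H(\delta) - \delta}\cdot(\text{something})$ — but since $H(\delta) - \delta$ can be negative this is not yet enough, which is exactly why the hypothesis $\beta > (s - H(\delta))/\delta$ enters: I should not count divisors but weight them by $\tau(d)^\beta$.

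So the correct final step is to combine the entropy estimate with Corollary~\ref{quantile} applied to $X = \log d$ (note each $\log(p^{a_p}) = a_p \log p$ is a nonnegative weight times a variable with decreasing — by Lemma~\ref{ent}'s construction, or at least we may choose the geometric distribution of Corollary~\ref{geo}, which is decreasing — mass function), giving $\Rho[\log d \le \delta\log n] = \Rho[d \le n^\delta] \ge \frac{1}{2e^{3/2}}$. Then I would lower-bound $\sum_{d \le n^\delta}\tau(d)^\beta$ by restricting the expectation $\EE[\,\tau(d)^\beta / \Rho[d]\,]$ — or more transparently, by a weighted AM–GM / convexity argument: $\sum_{d \mid n}\tau(d)^\beta \Rho[d]^{?}$. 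The cleanest route: for any divisor $d$ appearing with positive probability, $\tau(d)^\beta \ge 2^{\beta\log_2\tau(d)} \ge (1/\Rho[d])^{?}$ using that $\log_2\tau(d)$ and $\log_2(1/\Rho[d])$ are linearly related prime-by-prime under the geometric choice. Concretely, with the product-geometric distribution, $\Rho[d] = \prod_p \Rho[a_p]$ and $-\log_2\Rho[a_p]$ is an affine function of $a_p$, so $-\log_2\Rho[d] = \alpha\log_2 n_d + (\text{const depending on }n)$ won't hold across all $d$, but $-\log_2\Rho[d] \le \log_2\tau(n) - c\,(\log_2 n - \log_2 d)$ type inequalities do, letting me conclude $\tau(d)^\beta \gg \tau(n)^s / \Rho[d]$ on the event $d \le n^\delta$ whenever $\beta\delta + H(\delta) > s$. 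Summing over that event and using $\Rho[d \le n^\delta] \ge \frac{1}{2e^{3/2}}$ then gives $\sum_{d \le n^\delta}\tau(d)^\beta \gg_{\beta,\delta,s}\tau(n)^s$.

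\textbf{Main obstacle.} The delicate point is the bookkeeping in the last step: turning the entropy lower bound plus the quantile bound into the weighted sum estimate requires choosing the per-prime distribution so that $-\log_2\Rho[a_p]$, $a_p$, and $\log_2(e_p+1)$ are related by inequalities uniform in $e_p$, and then verifying that the exponent of $\tau(n)$ that emerges is exactly $s$ — which is where the strict inequality $\beta > (s-H(\delta))/\delta$ is consumed, with the loss $\frac{1}{2e^{3/2}}$ absorbed into the implied constant. I expect the geometric distribution of Corollary~\ref{geo} (rather than the inductive one of Lemma~\ref{ent}) to be the right choice here precisely because its mass function is log-affine in the exponent and decreasing, so both the entropy bound and Corollary~\ref{quantile} apply cleanly.
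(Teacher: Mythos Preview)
Your overall architecture matches the paper's: build a random divisor $D=\prod p^{X_p}$ with $\EE[\log D]=\delta\log n$, invoke Corollary~\ref{quantile} to get $\PP[D\le n^\delta]\ge \frac{1}{2e^{3/2}}$, and use the per-prime entropy bounds from Lemma~\ref{ent}/Corollary~\ref{geo}. The gap is in your final step. The pointwise inequality $\tau(d)^\beta \gg \tau(n)^s/\Rho[d]$ on $\{d\le n^\delta\}$ is \emph{false} in general. Take $n$ squarefree with $k$ prime factors and $d=1$: then $\tau(d)^\beta/\Rho[d]=(1-\delta)^{-k}$ while $\tau(n)^s=2^{sk}$, so you would need $-\log_2(1-\delta)\ge s$, which fails whenever $s>\log_2\frac{1}{1-\delta}$ (and the theorem allows arbitrarily large $s$). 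The log-affinity of $-\log_2\Rho[a_p]$ in $a_p$ does not rescue this, because at $a_p=0$ you get no help from $\beta$ at all. What is true is only the \emph{averaged} statement $\EE\big[\log_2\big(\tau(D)^\beta/\Rho[D]\big)\big]=H(D)+\beta\,\EE[\log_2\tau(D)]\ge (H(\delta)+\beta\delta)\log_2\tau(n)>s\log_2\tau(n)$, and conditioning on $\{D\le n^\delta\}$ destroys the product structure that makes this computable.

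The paper's device for converting the averaged statement into the desired bound is H\"older: for any $t>0$,
\[
\sum_{d\le n^\delta}\Rho[d]\ \le\ \Big(\sum_{d\le n^\delta}\tau(d)^\beta\Big)^{\frac{1}{t+1}}\Big(\sum_{d\mid n}\Rho[d]^{\frac{t+1}{t}}\tau(d)^{-\frac{\beta}{t}}\Big)^{\frac{t}{t+1}},
\]
so that $\big(\sum_{d\mid n}\Rho[d]^{(t+1)/t}\tau(d)^{-\beta/t}\big)^{-t}\le (2e^{3/2})^{t+1}\sum_{d\le n^\delta}\tau(d)^\beta$. The second factor is an \emph{unconditional} sum over all $d\mid n$, hence multiplicative, and as $t\to\infty$ its $(-t)$th power tends prime-by-prime to $2^{H(X_p)+\beta\,\EE[\log_2(X_p+1)]}\ge (m_p+1)^{H(\delta)+\beta\delta}$; the strict inequality $\beta\delta+H(\delta)>s$ then lets you pick a single finite $t$ that works. (There is also a uniformity issue in $m_p$ you have not addressed: the $t$ that works for a given $m_p$ may depend on $m_p$, and the paper handles large $m_p$ with a separate binary-digits construction. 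Your purely geometric choice would require an additional argument here.)
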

\begin{proof} Choose a number $M$ such that for all $m \ge M$ we have
\[
\beta > \frac{s - \frac{\lfloor\log_2(m+1)\rfloor}{\log_2(m+1)}H(\delta)}{\delta}.
\]

Write $n = \prod_i p_i^{m_i}$. We define a collection of independent random variables $X_i$, $X_i$ taking values in $\{0, ..., m_i\}$, as follows. If $m_i < M$, we take $X_i$ to be geometrically distributed with mean $\delta m_i$. If $m_i \ge M$, choose $k$ such that $2^k-1 \le m_i < 2^{k+1}-1$, and let $B_0, ..., B_{k-1}$ be $k$ i.i.d. random variables which are each $0$ with probability $1-\delta$ and $1$ with probability $\delta$. Set
\[
X_i = \Bigg(\sum_{j=0}^{k-2} 2^jB_j\Bigg) + (m_i+1-2^{k-1})B_{k-1}.
\]
Finally, we define a random variable $D$ dividing $n$ by $D = \prod_i p_i^{X_i}$.

We have
\[
\mathbb{E}[\log(D)] = \sum_i \mathbb{E}[X_i]\log(p_i) = \delta\log(n),
\]
so by Corollary \ref{quantile} we have
\[
\mathbb{P}[D \le n^{\delta}] \ge \frac{1}{2e^{3/2}}.
\]
Setting $P_n(d) = \mathbb{P}[D=d]$, this can be written as
\[
1 \le 2e^{3/2}\sum_{\substack{d\mid n\\ d\le n^{\delta}}} P_n(d).
\]

By H\"{o}lder's inequality, for any $t > 0$ we have
\[
\sum_{\substack{d\mid n\\ d\le n^{\delta}}} P_n(d) \le \bigg(\sum_{d\mid n,\ d\le n^{\delta}} \tau(d)^{\beta}\bigg)^{\frac{1}{t+1}}\bigg(\sum_{d\mid n} P_n(d)^{\frac{t+1}{t}}\tau(d)^{-\frac{\beta}{t}}\bigg)^{\frac{t}{t+1}}.
\]
Combining the last two inequalities, we see that
\[
\bigg(\sum_{d\mid n} P_n(d)^{\frac{t+1}{t}}\tau(d)^{-\frac{\beta}{t}}\bigg)^{-t} \le \big(2e^{3/2}\big)^{t+1}\sum_{\substack{d\mid n\\ d\le n^{\delta}}} \tau(d)^{\beta}.
\]
To finish, we just need to choose $t$ large enough that the left hand side of the above is at least $\tau(n)^s$. Since the left hand side is a multiplicative function of $n$, we can restrict to the case $n = p^m$, with just a single probability distribution $X$ on the possible exponents $\{0, ..., m\}$. Write $\rho_m(x)$ for $P_{p^m}(p^x) = \mathbb{P}[X=x]$. Then we just need to choose $t$ large enough to make the inequality
\begin{align}\label{tin}
(m+1)^s \le \left(\sum_{x=0}^m \rho_m(x)^{\frac{t+1}{t}}(x+1)^{-\frac{\beta}{t}}\right)^{-t}
\end{align}
hold for all $m\ge 1$. We have
\[
\lim_{t\rightarrow\infty}\left(\sum_{x=0}^m \rho(x)\left(\frac{(x+1)^{\beta}}{\rho(x)}\right)^{-\frac{1}{t}}\right)^{-t} = \prod_{x=0}^m \left(\frac{(x+1)^{\beta}}{\rho(x)}\right)^{\rho(x)} = 2^{H(X)+\beta \mathbb{E}[\log_2(X+1)]}.
\]
Since $\log_2$ is a concave function, we have
\[
\mathbb{E}[\log_2(X+1)] \ge \mathbb{E}\left[\frac{X}{m}\log_2(m+1) + \left(1-\frac{X}{m}\right)\log_2(1)\right] = \delta\log_2(m+1).
\]
Thus, by the assumption on $\beta$ and Corollary \ref{geo} we can find a $t_0$ such that for any $t \ge t_0$ and any $m < M$ inequality (\ref{tin}) is satisfied. For $m \ge M$, we use the easy inequality
\[
\left(\sum_{x=0}^m \rho(x)^{\frac{t+1}{t}}(x+1)^{-\frac{\beta}{t}}\right)^{-t} \ge \left((1-\delta)^\frac{t+1}{t}+\delta^{\frac{t+1}{t}}2^{-\frac{\beta}{t}}\right)^{-t\lfloor \log_2(m+1)\rfloor},
\]
which follows from the fact that for any $x$, $x+1$ is at least $2^B$, where $B$ is the number of $1$s in the binary representation of $x$. Thus if we take $t$ large enough to make
\[
\left((1-\delta)^\frac{t+1}{t}+\delta^{\frac{t+1}{t}}2^{-\frac{\beta}{t}}\right)^{-t}
\]
sufficiently close to $2^{H(\delta)+\beta\delta}$, then inequality (\ref{tin}) will be satisfied for $m \ge M$ as well.
\end{proof}

\bigskip

\emph{Acknowledgements.} The author would like to thank Professor Soundararajan for introducing the author to this problem and providing helpful feedback on early drafts of this paper, as well as pointing out that the idea in fedja's MathOverflow post could be used to simplify the argument and improve the bounds.

\nocite{vandercorput}
\bibliography{octahedral}
\bibliographystyle{plain}

\end{document}